\definecolor{indigo}{HTML}{492DA5}
\providecommand{\noopsort}[1]{}
\g@addto@macro\bfseries{\boldmath}\makeatother
\let\origsection\section
\renewcommand\section{\@ifstar{\starsection}{\nostarsection}}
\newcommand\sectionspace{\vspace{0.5ex}}
\newcommand\nostarsection[1]{\sectionspace\origsection{#1}\sectionspace}
\newcommand\starsection[1]{\sectionspace\origsection*{#1}\sectionspace}
\setlist[enumerate]{font=\normalfont}
\crefname{enumi}{}{}
\crefname{enumii}{}{}
\numberwithin{equation}{section}
\crefname{equation}{equation}{equations}
\newtheorem{theorem}{Theorem}[section]
\newtheorem{thm}[theorem]{Theorem}
\crefname{thm}{Theorem}{Theorems}
\newtheorem{lemma}[theorem]{Lemma}
\crefname{lemma}{Lemma}{Lemmas}
\newtheorem{prop}[theorem]{Proposition}
\crefname{prop}{Proposition}{Propositions}
\newtheorem{cor}[theorem]{Corollary}
\crefname{cor}{Corollary}{Corollaries}
\theoremstyle{definition}
\newtheorem{definition}[theorem]{Definition}
\crefname{definition}{Definition}{Definitions}
\theoremstyle{remark}
\newtheorem{example}[theorem]{Example}
\crefname{example}{Example}{Examples}
\newcommand{\hl}[1]{\textcolor{magenta}{\emph{#1}}}
\newcommand{\C}{\mathbb{C}}
\newcommand{\N}{\mathbb{N}}
\newcommand{\T}{\mathbb{T}}
\newcommand{\Z}{\mathbb{Z}}
\newcommand{\FF}{\mathcal{F}}
\newcommand{\Go}{G^{(0)}}
\newcommand{\Gc}{G^{(2)}}
\newcommand{\Sigmao}{\Sigma^{(0)}}
\newcommand{\vecspan}{\operatorname{span}}
\newcommand{\supp}{\operatorname{supp}}
\newcommand{\osupp}{\supp^{\circ}}
\newcommand{\id}{\operatorname{id}}
\newcommand{\Iso}{\operatorname{Iso}}
\renewcommand{\th}{\mathrm{th}}
\newcommand{\restr}[1]{\ensuremath{\vert_{#1}}}
\newcommand{\rnorm}[1]{\ensuremath{\norm{#1}_r}}
\begin{document}

\title[The local bisection hypothesis for twisted groupoid C*-algebras]{The local bisection hypothesis for twisted groupoid C*-algebras}

\author[Armstrong]{Becky Armstrong}
\author[Brown]{Jonathan H.\ Brown}
\author[Clark]{Lisa Orloff Clark}
\author[Courtney]{Kristin Courtney}
\author[Lin]{Ying-Fen Lin}
\author[McCormick]{Kathryn McCormick}
\author[Ramagge]{Jacqui Ramagge}

\date{\today}

\address[B.\ Armstrong and L.O.\ Clark]{School of Mathematics and Statistics, Victoria University of Wellington, PO Box 600, Wellington 6140, NEW ZEALAND}
\email[B.\ Armstrong]{\href{mailto:becky.armstrong@vuw.ac.nz}{becky.armstrong@vuw.ac.nz}}
\email[L.O.\ Clark]{\href{mailto:lisa.orloffclark@vuw.ac.nz}{lisa.orloffclark@vuw.ac.nz}}
\address[J.H.\ Brown]{Department of Mathematics, University of Dayton, 300 College Park, Dayton OH, 45469, UNITED STATES}
\email{\href{mailto:jonathan.henry.brown@gmail.com}{jonathan.henry.brown@gmail.com}}
\address[K.\ Courtney]{Mathematical Institute, University of M\"unster, Einsteinstr.\ 62, 48149 M\"unster, GERMANY}
\email{\href{mailto:kcourtne@uni-muenster.de}{kcourtne@uni-muenster.de}}
\address[Y.-F.\ Lin]{Mathematical Sciences Research Centre, Queen's University Belfast, Bel\-fast, BT7 1NN, UNITED KINGDOM}
\email{\href{mailto:y.lin@qub.ac.uk}{y.lin@qub.ac.uk}}
\address[K.\ McCormick]{Department of Mathematics and Statistics, California State University, Long Beach, 1250 Bellflower Blvd, Long Beach CA, 90840, UNITED STATES}
\email{\href{mailto:kathryn.mccormick@csulb.edu}{kathryn.mccormick@csulb.edu}}
\address[J.\ Ramagge]{University of South Australia, Adelaide, AUSTRALIA}
\email{\href{mailto:jacqui.ramagge@unisa.edu.au}{jacqui.ramagge@unisa.edu.au}}

\subjclass[2020]{46L05 (primary),
22A22 (secondary)}
\keywords{Effective groupoid, twisted groupoid C*-algebra, local bisection hypothesis}

\thanks{This research began at the ICMS in Edinburgh and was supported by their Research-in-Groups funding. The research was additionally funded by the Marsden Fund of the Royal Society of New Zealand (grant number 21-VUW-156); California State University, Long Beach; the Deutsche Forschungsgemeinschaft (DFG, German Research Foundation) under Germany's Excellence Strategy -- EXC 2044 -- 390685587, Mathematics M\"unster -- Dynamics -- Geometry -- Structure; the Deutsche Forschungsgemeinschaft (DFG, German Research Foundation) -- Project-ID 427320536 -- SFB 1442; and ERC Advanced Grant 834267 -- AMAREC. We also extend our thanks to Victoria University of Wellington for their hospitality during various author visits to the third-named author}

\begin{abstract} In this note, we present criteria that are equivalent to a locally compact Hausdorff groupoid $G$ being effective. One of these conditions is that $G$ satisfies the \emph{C*\nobreakdash-algebraic local bisection hypothesis}; that is, that every normaliser in the reduced twisted groupoid C*-algebra is supported on an open bisection. The semigroup of normalisers plays a fundamental role in our proof, as does the semigroup of normalisers in cyclic group C*-algebras.
\end{abstract}

\maketitle

\section{Introduction}

The connection between twisted groupoid C*-algebras and diagonal and Cartan pairs has become a cornerstone of C*-algebraic theory. Given a C*-algebra $A$ and a commutative subalgebra $B$, Kumjian and Renault show in \cite{Kumjian1986} and \cite{Renault2008}, respectively, that:
\begin{itemize}
\item $(A,B)$ is a \emph{diagonal} pair if and only if there is a twisted groupoid $\Sigma \to G$ with $G$ \emph{principal} such that $A \cong C_r^*(G;\Sigma)$ and $B \cong C_0(\Go)$; and
\item $(A,B)$ is a \emph{Cartan} pair if and only if there is a twisted groupoid $\Sigma \to G$ with $G$ \emph{effective} such that $A \cong C_r^*(G;\Sigma)$ and $B \cong C_0(\Go)$.
\end{itemize}

Twisted Steinberg algebras were introduced in \cite{ACCLMR2022} as a purely algebraic analogue to twisted groupoid C*-algebras, and algebraic versions of Kumjian and Renault's findings were established in \cite{ACCCLMRSS2023}. The authors (which include most of the authors of the current note) define \emph{algebraic diagonal} and \emph{algebraic Cartan pairs} $(A_0,B_0)$ in \cite{ACCCLMRSS2023}, and they prove that $A_0$ is isomorphic to the twisted Steinberg algebra $A(G;\Sigma)$ of a ``discrete'' twist $\Sigma$ over an effective ample Hausdorff groupoid $G$ (which is principal if $B_0$ is an algebraic diagonal subalgebra of $A_0)$. An algebraic Cartan subalgebra $B_0$ of $A_0$ is a maximal commutative subalgebra (just like a C*-algebraic Cartan subalgebra).

The authors of \cite{ACCCLMRSS2023} also define \emph{algebraic quasi-Cartan pairs}, in which the maximality requirement is relaxed to requiring that the conditional expectation $E\colon A_0 \to B_0$ be \hl{implemented by idempotents}, in the sense that for each algebraic normaliser $n \in A_0$, there exists an idempotent $p \in B_0$ such that $E(n) = pn = np$. This corresponds to a relaxation of the effectiveness assumption as follows: a discrete twist $(\Sigma,i,q)$ over an ample Hausdorff groupoid $G$ satisfies the \hl{local bisection hypothesis} \cite[Definition~4.1]{ACCCLMRSS2023} if, for every normaliser $n \in A(G;\Sigma)$ of $A(\Go)$, $q(\osupp(n))$ is an open bisection of $G$. Several examples are provided to demonstrate that there are indeed discrete twists over ample Hausdorff groupoids that satisfy the local bisection hypothesis but for which the groupoid is not effective (see \cite[Section~9]{ACCCLMRSS2023}). However, every discrete twist over an effective ample groupoid does satisfy the local bisection hypothesis (see \cite[Lemma~4.7(b)]{ACCCLMRSS2023}). The authors then show that:
\begin{itemize}
\item $(A_0,B_0)$ is an \emph{algebraic quasi-Cartan} pair if and only if there is a discrete twisted groupoid $\Sigma \to G$ that satisfies the local bisection hypothesis such that $A_0 \cong A(G;\Sigma)$, and $B_0 \cong A(\Go)$.
\end{itemize}
Thus, a natural question arises: what does the C*-algebraic local bisection hypothesis tell us about a twisted groupoid? Is this a more general property than effectiveness? It turns out that the answer is no. We show that, unlike in the algebraic setting, if we insist that each normaliser in the reduced twisted groupoid C*-algebra $C_r^*(G;\Sigma)$ is supported on the preimage under the quotient map of an open bisection of $G$ (when viewed as a function on $\Sigma$ via the map $j$; see, for example, \cite[Proposition~2.8]{BFPR2021}), then $G$ is effective. That is, we prove the converse to Renault's \cite[Proposition~4.8(ii)]{Renault2008}. With this characterisation, we see that being quasi-Cartan but not Cartan is a purely algebraic phenomenon.

\subsection*{Outline}

We begin by discussing the theory of topological groupoids, twisted groupoid C*\nobreakdash-algebras, and Cartan pairs. We then state our main result in \cref{thm: main}, which is a list of conditions that are equivalent to a groupoid being effective. We also present the special case of the groupoid being ample, in which effectiveness is equivalent to the conditional expectation onto the Cartan subalgebra being ``implemented by projections'' (see \cref{cor: ample}). (This is similar to the condition (AQP) in \cite{ACCCLMRSS2023} of the algebraic conditional expectation being implemented by idempotents.) We then show in \cref{sec: example} that the discrete group of integers, which is an ample groupoid, satisfies the algebraic local bisection hypothesis but does not satisfy the C*-algebraic version. We use this, along with an analysis of normalisers in finite cyclic group C*-algebras, to complete the proof of \cref{thm: main} in \cref{sec: Cstar-LBH implies effectiveness} (see \cref{prop: untwisted LBH implies effective}).

\section{Preliminaries}

In this section, we recall some terminology relating to groupoids, twists, twisted groupoid C*-algebras, and Cartan pairs. For groupoids, twists, and twisted groupoid C*-algebras, we primarily follow the conventions and notation of \cite{Sims2020}.

\subsection{Groupoid terminology}

A \hl{groupoid} $G$ is a small category in which every morphism $\gamma \in G$ has a unique inverse $\gamma^{-1} \in G$. We write $\Go$ for the set of units (or objects) in $G$, and we write $\Gc$ for the collection of composable pairs in $G \times G$. We read composition of elements from right to left, so the range and source maps $r,s\colon G \to \Go$ are given by $r(\gamma) = \gamma \gamma^{-1}$ and $s(\gamma) = \gamma^{-1} \gamma$, respectively. For each $x, y \in \Go$, we define
\[
G^x \coloneqq \{ \gamma \in G : r(\gamma) = x \}, \quad G_y \coloneqq \{ \gamma \in G : s(\gamma) = y \}, \quad \text{and } \quad G_y^x \coloneqq G^x \cap G_y.
\]

We call a groupoid $G$ a \hl{topological groupoid} if it is endowed with a topology with respect to which composition and inversion are continuous, and the unit space $\Go$ is Hausdorff. In what follows, we will restrict our attention to locally compact Hausdorff groupoids; however, we do not require our groupoids to be second-countable. We say that a topological groupoid $G$ is \hl{\'etale} if the range map $r\colon G \to \Go$ is a local homeomorphism (or, equivalently, if the source map $s\colon G \to \Go$ is a local homeomorphism). If $G$ is Hausdorff, then $\Go$ is closed, and if $G$ is \'etale, then $\Go$ is open. We call a subset $B$ of $G$ a \hl{bisection} if it is contained in an open subset $U$ of $G$ such that $r\restr{U}$ and $s\restr{U}$ are homeomorphisms onto open subsets of $\Go$. Every locally compact Hausdorff \'etale groupoid has a basis of precompact open bisections. We call a topological groupoid \hl{ample} if it has a basis of \emph{compact} open bisections. Given subsets $A$ and $B$ of a Hausdorff \'etale groupoid $G$, we define
\[
A^{-1} \coloneqq \{ \alpha^{-1} : \alpha \in A \} \quad \text{ and } \quad AB \coloneqq \{ \alpha\beta : (\alpha, \beta) \in (A \times B) \cap \Gc \}.
\]
Note that if $A$ and $B$ are open bisections of $G$, then so are $A^{-1}$ and $AB$, and in this case, $AA^{-1} = r(A)$ and $A^{-1}A = s(A)$.

The \hl{isotropy} of a groupoid $G$ is the subgroupoid
\[
\Iso(G) \coloneqq \bigsqcup_{x \in \Go} G_x^x = \{ \gamma \in G : r(\gamma) = s(\gamma) \}.
\]
We denote the topological interior of the isotropy of $G$ by $\Iso(G)^\circ$, and we observe that if $G$ is a locally compact Hausdorff \'etale groupoid, then so is $\Iso(G)^\circ$. We say that $G$ is \hl{principal} if $\Iso(G) = \Go$, and that $G$ is \hl{effective} if $\Iso(G)^\circ = \Go$. A number of equivalent (and non-equivalent but related) conditions to effectiveness are given in, for instance, \cite[Lemma~3.1]{BCFS2014} and \cite[Remark~2.1]{ACCCLMRSS2023}.

\subsection{Twists}

Let $G$ be a locally compact Hausdorff \'etale groupoid. A \hl{twist} $\Sigma$ over $G$ (often called a \hl{twisted groupoid} and denoted by $(\Sigma,i,q)$ or by $\Sigma \to G$) is a sequence
\[
\Go \times \T \overset{i}{\hookrightarrow} \Sigma \overset{q}{\twoheadrightarrow} G,
\]
where $\Go \times \T$ is a trivial group bundle with fibres $\T$, and $\Sigma$ is a Hausdorff groupoid with unit space $\Sigmao = i(\Go \times \{1\})$, such that
\begin{itemize}
\item $i$ and $q$ are continuous groupoid homomorphisms that restrict to homeomorphisms of unit spaces;
\item the sequence is \hl{exact}, in the sense that $i$ is injective, $q$ is surjective, and $q^{-1}(\Go) = i(\Go \times \T)$;
\item $i(\Go \times \T)$ is \hl{central} in $\Sigma$, in the sense that $i(r(\sigma),z) \,\sigma = \sigma \, i(s(\sigma),z)$ for all $\sigma \in \Sigma$ and $z \in \T$; and
\item $\Sigma$ is a locally trivial $\T$-bundle over $G$, in the sense that for every $\alpha \in G$, there is an open bisection $U_\alpha$ of $G$ such that $\alpha \in U_\alpha$, and there is a continuous map $S_\alpha\colon U_\alpha \to \Sigma$ (called a \hl{continuous local section}) such that $q \circ S_\alpha = \id_{U_\alpha}$ and $U_\alpha \times \T \ni (\beta, z) \mapsto i(r(\beta),z) \, S_\alpha(\beta) \in q^{-1}(U_\alpha)$ is a homeomorphism.
\end{itemize}

Given a twisted groupoid $\Sigma \to G$, we define $z \cdot \sigma \coloneqq i(r(\sigma),z) \,\sigma = \sigma \, i(s(\sigma),z)$ for all $\sigma \in \Sigma$ and $z \in \T$. For $\sigma, \varepsilon \in \Sigma$, we have $q(\sigma) = q(\varepsilon)$ if and only if there is a (necessarily unique) $z \in \T$ such that $\sigma = z \cdot \varepsilon$. For each $x \in \Go$, we have $q^{-1}(x) = i(\{x\} \times \T)$.

There are variations on how twisted groupoids are defined in the literature, but these definitions are generally equivalent; see, for example, \cite[Remark~2.6]{Armstrong2022}. Twists over principal groupoids are of particular importance because they characterise C*-diagonals; see Kumjian's definition \cite{Kumjian1986} and subsequent papers \cite{MW1992, aHKS2011, CaH2012}. Twists over effective groupoids are also of significant interest because they characterise Cartan pairs; see Renault's work \cite[Theorems~5.2 and 5.9]{Renault2008}.

\subsection{Twisted groupoid C*-algebras}

Given a locally compact Hausdorff space $X$ and a continuous function $f \in C(X)$, we define $\osupp(f) \coloneqq f^{-1}(\C {\setminus} \{0\})$ and $\supp(f) \coloneqq \overline{\osupp(f)}$. We say that $f \in C(X)$ is \hl{compactly supported} if $\supp(f)$ is compact, and we write $C_c(X)$ for the collection of compactly supported functions in $C(X)$. We say that $f \in C(X)$ \hl{vanishes at infinity} if, for every $\epsilon > 0$, the set $\{ x \in X : \abs{f(x)} \ge \epsilon \}$ is compact. We write $C_0(X)$ for the collection of functions in $C(X)$ that vanish at infinity, and we note that $C_0(X)$ is the C*-completion of $C_c(X)$ with respect to the uniform norm $\norm{\cdot}_\infty$.

Let $G$ be a locally compact Hausdorff \'etale groupoid. Then $C_c(G)$ is a *-algebra under the operations
\[
(f * g)(\gamma) \coloneqq \sum_{\eta \in G^{r(\gamma)}} f(\eta) g(\eta^{-1}\gamma) \quad \text{ and } \quad f^*(\gamma) \coloneqq \overline{f(\gamma^{-1})}.
\]
For each $x \in \Go$, let $\pi_x\colon C_c(G) \to B(\ell^2(G_x))$ denote the regular representation of $C_c(G)$ at $x$, given by $\pi_x(f)g = f * g$. The completion $C_r^*(G)$ of $C_c(G)$ with respect to the \hl{reduced norm} $\rnorm{f} \coloneqq \sup\{ \norm{ \pi_x(f)} : x \in \Go \}$ is called the \hl{reduced groupoid C*-algebra} of $G$. The Steinberg algebra $A(G)$ (see \cite[Section~4]{Steinberg2010} and \cite[Section~3]{CFST2014}) is dense in both $C_c(G)$ and $C_r^*(G)$ with respect to the reduced norm. For $f \in C_c(G)$ with $\supp(f) \subseteq \Go$, we have $\rnorm{f} = \norm{f}_\infty$, and so we identify the completion of $C_c(\Go) \subseteq C_c(G)$ with respect to the reduced norm with $C_0(\Go)$.

The construction of twisted groupoid C*-algebras is similar. Let $\Sigma$ be a twist over $G$. We say that $f \in C_c(\Sigma)$ is \hl{$\T$-contravariant} if $f(\sigma \cdot z) = \overline{z} \cdot f(\sigma)$ for all $\sigma \in \Sigma$ and $z \in \T$. Define $C_c(G;\Sigma) \coloneqq \{ f \in C_c(\Sigma) : f \text{ is $\T$-contravariant} \}$, and for each $x \in \Go$, define $L^2(G_x;\Sigma_x) \coloneqq \{ f \in L^2(\Sigma_x) : f \text{ is $\T$-contravariant} \}$. For each $x \in \Go$, let $\pi_x\colon C_c(G;\Sigma) \to B(L^2(G_x;\Sigma_x))$ denote the regular representation of $C_c(G;\Sigma)$ at $x$, given by $\pi_x(f)g = f * g$. The completion $C_r^*(G;\Sigma)$ of $C_c(G;\Sigma)$ with respect to the \hl{reduced norm} $\rnorm{f} \coloneqq \sup\{ \norm{ \pi_x(f)} : x \in \Go \}$ is called the \hl{reduced twisted groupoid C*-algebra} of the pair $(G,\Sigma)$. The twisted Steinberg algebra $A(G;\Sigma)$ (see \cite[Section~2.4]{ACCCLMRSS2023}) is dense in both $C_c(G;\Sigma)$ and $C_r^*(G;\Sigma)$ with respect to the reduced norm. If $\Sigma = G \times \T$, then $C_r^*(G;\Sigma) \cong C_r^*(G)$ and $A(G;\Sigma) \cong A(G)$. The groupoid $q^{-1}(\Go)$ is a twist over $\Go$, and the completion of $C_c(\Go;q^{-1}(\Go)) \subseteq C_c(G;\Sigma)$ with respect to the reduced norm is isomorphic to $C_0(\Go)$. We write $C_0(\Go) \subseteq C_r^*(G;\Sigma)$ with this identification in mind. Note that in some literature (such as \cite{BFPR2021}) the reduced C*-algebra of a twisted groupoid $\Sigma \to G$ is denoted by $C_r^*(\Sigma;G)$ and is defined using continuous sections of a complex line bundle instead (see \cite{Renault2008}).

Given a twisted groupoid $\Sigma \to G$, we write $C_0(G;\Sigma)$ for the collection of $\T$-contravariant functions in $C_0(\Sigma)$. As used in \cite{Renault2008} and as detailed in \cite{BFPR2021}, we can view elements of $C_r^*(G;\Sigma)$ as $\T$-contravariant functions in $C_0(G;\Sigma)$ via a norm-decreasing injective linear map $j\colon C_r^*(G;\Sigma) \to C_0(G;\Sigma)$ that preserves convolution and involution and satisfies $j(f) = f$ for all $f \in C_c(G;\Sigma)$. Given $a \in C_r^*(G;\Sigma)$, we define $S_a \coloneqq q(\osupp(j(a)))$. This is consistent with the definition of $\supp'(a)$ in \cite[Section~4]{Renault2008}; see also \cite[Remark~2.4]{BFPR2021}. Note that $S_a$ is open in $G$ because $j(a)$ is continuous and $q$ is an open map (see \cite[Lemma~2.7(a)]{Armstrong2022}). Similarly, in the untwisted setting, we can view elements of $C_r^*(G)$ as functions in $C_0(G)$ via the norm-decreasing injective linear map $j\colon C_r^*(G) \to C_0(G)$ extending the identity map on $C_c(G)$ (see \cite[Proposition~II.4.2]{Renault1980} for details). In this setting, given $a \in C_r^*(G)$, we define $S_a \coloneqq \osupp(j(a))$, which is again open in $G$.

\subsection{Cartan pairs}

Let $A$ be a C*-algebra and let $B$ be an abelian subalgebra of $A$. We call an element $n \in A$ a \hl{normaliser} of $B$ if $nBn^* \cup n^*Bn \subseteq B$.
We write $N(B)$ for the set of normalisers of $B$ in $A$. Note that $n \in A$ is a normaliser of $B$ if and only if $n^*$ is a normaliser of $B$.

There are various equivalent definitions of a \emph{Cartan pair} in the literature; we use the one from \cite[Definition~1.1]{BFPR2021}.

\begin{definition}
Let $A$ be a C*-algebra and let $B$ be an abelian subalgebra of $A$. We call $(A,B)$ a \hl{Cartan pair} and say that $B$ is a \hl{Cartan subalgebra} of $A$ if the following conditions are satisfied:
\begin{enumerate}[label=(\roman*)]
\item $B$ is a maximal abelian subalgebra of $A$;
\item $\vecspan{(N(B))} = A$; and
\item there exists a faithful conditional expectation $E\colon A \to B$.
\end{enumerate}
\end{definition}

Most definitions of a Cartan pair that appear in the literature include the additional assumption that $B$ contains an approximate identity for $A$. However, Pitts shows in \cite[Theorem~2.6]{Pitts2022} that this condition follows automatically from the other three. It follows that for all $n \in N(B)$, we have $nn^*, n^*n \in B$.

C*-algebras containing Cartan subalgebras are precisely the twisted C*-algebras of effective Hausdorff \'etale groupoids (see \cref{thm: Cartan subalgebras} below). This was proven in the separable setting by Renault \cite{Renault2008} and was later extended to the non-separable setting by Raad \cite{Raad2022}.

\begin{thm}[{\cite[Theorems~5.2 and 5.9]{Renault2008} and \cite[Theorem~1.2]{Raad2022}}] \label{thm: Cartan subalgebras}
Let $\Sigma$ be a twist over an effective locally compact Hausdorff \'etale groupoid $G$. Then the map that restricts functions in $C_c(G;\Sigma)$ from $\Sigma$ to $q^{-1}(\Go)$ extends to a faithful conditional expectation $E\colon C_r^*(G;\Sigma) \to C_r^*(\Go; q^{-1}(\Go)) \cong C_0(\Go)$, and $C_0(\Go)$ is a Cartan subalgebra of $C_r^*(G;\Sigma)$. Conversely, if $(A,B)$ is a Cartan pair, then there exists a twist $\Sigma$ over an effective locally compact Hausdorff \'etale groupoid $G$ such that there is an isomorphism from $A$ to $C_r^*(G;\Sigma)$ that maps $B$
onto $C_0(\Go)$.
\end{thm}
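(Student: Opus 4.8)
The plan is to prove the two implications separately. The forward implication --- that an effective twist yields a Cartan pair --- will follow the pattern of Renault's \cite[Proposition~4.8]{Renault2008}, and the converse is the \emph{Weyl groupoid} construction of \cite[Section~4]{Renault2008}, carried out without separability hypotheses as in \cite{Raad2022}. Throughout I write $D \coloneqq C_0(\Go)$ for the canonical subalgebra of $C_r^*(G;\Sigma)$.

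\emph{Effective $\Rightarrow$ Cartan.} First I would observe that the restriction map $f \mapsto f\restr{q^{-1}(\Go)}$ on $C_c(G;\Sigma)$ is positive and $\rnorm{\cdot}$-contractive --- positivity by a direct computation, contractivity because $\norm{j(f)}_\infty \le \rnorm{f}$ --- so by Tomiyama's theorem it extends to a conditional expectation $E \colon C_r^*(G;\Sigma) \to D$ with $j(E(a)) = j(a)\restr{q^{-1}(\Go)}$. For faithfulness, note that $E(a^*a)(x) = \langle \pi_x(a^*a)\delta_x, \delta_x \rangle = \norm{\pi_x(a)\delta_x}^2$ for each unit $x$, where $\delta_x \in L^2(G_x;\Sigma_x)$ is the canonical unit vector at $x$; a standard intertwining argument --- right translation along $\gamma \in G_x$ intertwines $\pi_x$ with $\pi_{r(\gamma)}$ --- promotes ``$\pi_x(a)\delta_x = 0$ for all $x$'' to ``$\pi_x(a) = 0$ for all $x$'', so $\rnorm{a} = 0$; hence $E(a^*a) = 0 \Rightarrow a = 0$, giving Cartan axiom~(iii). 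For axiom~(ii), cover $G$ by precompact open bisections and use a subordinate partition of unity: each $f \in C_c(G;\Sigma)$ is then a finite sum of elements supported in sets $q^{-1}(U)$ with $U$ an open bisection, and each such element $n$ normalises $D$ because $nDn^*$ and $n^*Dn$ are supported in $q^{-1}(r(U)) \subseteq q^{-1}(\Go)$ and $q^{-1}(s(U)) \subseteq q^{-1}(\Go)$; since $C_c(G;\Sigma)$ is dense, $\vecspan(N(D))$ is dense. For axiom~(i) --- the only place effectiveness enters --- note that multiplication by elements of $D$ acts on $j(a)$ through the source and range maps, so $a$ commuting with $D$ forces $j(a)(\sigma) = 0$ whenever $r(q(\sigma)) \neq s(q(\sigma))$; thus the \emph{open} set $S_a = q(\osupp(j(a)))$ lies in $\Iso(G)$, hence in $\Iso(G)^\circ = \Go$, whence $a \in D$. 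So $D$ is a Cartan subalgebra of $C_r^*(G;\Sigma)$ and $E$ is the asserted expectation.

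\emph{Cartan $\Rightarrow$ effective twist: the construction.} Now let $(A,B)$ be a Cartan pair; identify $B \cong C_0(X)$ with $X \coloneqq \widehat B$ locally compact Hausdorff. By \cite[Theorem~2.6]{Pitts2022}, $B$ contains an approximate unit for $A$, so $n^*n, nn^* \in B$ for $n \in N(B)$; writing $d(n) \coloneqq \{x : (n^*n)(x) > 0\}$, there is a homeomorphism $\alpha_n$ of $d(n)$ onto $\{x : (nn^*)(x) > 0\}$ with $(n^*bn)(x) = b(\alpha_n(x))\,(n^*n)(x)$ for $b \in B$. Build the Weyl groupoid $G$ of germs $[n,x]$ (with $x \in d(n)$), where $[n,x] = [m,y]$ iff $x = y$ and $\alpha_n = \alpha_m$ near $x$, with $r[n,x] = \alpha_n(x)$, $s[n,x] = x$, $[n,x]^{-1} = [n^*,\alpha_n(x)]$ and $[n,\alpha_m(y)][m,y] = [nm,y]$; the sets $Z(n,V) \coloneqq \{[n,x] : x \in V\}$ (for $V \subseteq d(n)$ open) form a basis making $G$ a locally compact Hausdorff \'etale groupoid with $\Go = X$. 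Effectiveness --- and, by a similar argument, Hausdorffness --- of $G$ is where maximality of $B$ enters: if $[n,x] \in \Iso(G)^\circ$, pick a basic neighbourhood $Z(n,V) \subseteq \Iso(G)$, so $\alpha_n\restr{V} = \id$; then for $b \in C_c(V)$ with $b \ge 0$ and $b(x) = 1$ the normaliser $bn$ commutes with \emph{all} of $B$, so maximality forces $bn \in B$, and one reads off $[n,x] = [bn,x] = [\abs{bn},x] \in \Go$. Then build the twist $\Sigma$ as the quotient of $\{(n,x) : n \in N(B),\ x \in d(n)\}$ by $(n,x) \approx (m,x)$ iff $E(m^*n)(x) > 0$, with $z \cdot [n,x] \coloneqq [zn,x]$, with $q \colon \Sigma \to G$ sending $[n,x]$ to its germ, and with $i(x,z) \coloneqq [zb,x]$ for any $b \in B$ with $b \ge 0$, $b(x) = 1$. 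Using Cauchy--Schwarz for the $E$-valued sesquilinear form together with the germ structure, one checks that $q^{-1}$ of each germ is a single free $\T$-orbit, that the sequence is exact and central, and that a fixed normaliser is a continuous local section over each $Z(n,V)$; so $\Sigma \to G$ is a genuine twist.

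\emph{Cartan $\Rightarrow$ effective twist: the isomorphism, and the main obstacle.} For $n \in N(B)$ let $\widehat n$ be the continuous $\T$-contravariant function on $\Sigma$ supported over $Z(n,d(n))$ with $\widehat n([n,y]_\Sigma) = (n^*n)^{1/2}(y)$; approximating $n$ by cut-downs $bn$ ($b \in C_c(X)$) shows $\widehat n \in C_r^*(G;\Sigma)$. Fix $x \in X$ and form the GNS representation $(\pi_{\varphi_x}, H_{\varphi_x})$ of $A$ for the state $\varphi_x \coloneqq \mathrm{ev}_x \circ E$. The classes of normalisers span a dense subspace of $H_{\varphi_x}$, and the inner product of the classes of $n$ and $m$ equals $E(m^*n)(x)$, which vanishes unless $[n,x] = [m,x]$ in $G$ and otherwise has modulus $(n^*n)^{1/2}(x)(m^*m)^{1/2}(x)$; hence $H_{\varphi_x}$ has an orthonormal basis indexed by $G_x$, canonical up to exactly the phase ambiguity recorded by $\Sigma$, giving a natural unitary $W_x \colon H_{\varphi_x} \xrightarrow{\sim} L^2(G_x;\Sigma_x)$ that sends the class of $n$ to the point mass at $[n,x]_\Sigma$ of norm $(n^*n)^{1/2}(x)$. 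A short convolution computation gives $W_x\, \pi_{\varphi_x}(n)\, W_x^{-1} = \pi_x(\widehat n)$ for all $n \in N(B)$, where $\pi_x$ is the regular representation of $C_r^*(G;\Sigma)$ at $x$. Setting $\Phi(a) \coloneqq (W_x\, \pi_{\varphi_x}(a)\, W_x^{-1})_{x \in X}$ and identifying $C_r^*(G;\Sigma)$ with its image under the faithful direct sum $\bigoplus_x \pi_x$, we obtain a $*$-homomorphism $\Phi \colon A \to C_r^*(G;\Sigma)$ with $\Phi(n) = \widehat n$ for $n \in N(B)$. Faithfulness of $E$ makes $\{\varphi_x\}_{x \in X}$ faithful on $A$, so $\norm{a}_A = \sup_x \norm{\pi_{\varphi_x}(a)} = \sup_x \norm{\pi_x(\Phi(a))} = \rnorm{\Phi(a)}$ and $\Phi$ is isometric; and every basic bisection function in $C_c(G;\Sigma)$ has the form $\widehat{bn}$ ($b \in B$, $n \in N(B)$), so $\Phi$ has dense range. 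Hence $\Phi$ is an isometric isomorphism $A \xrightarrow{\sim} C_r^*(G;\Sigma)$ carrying $B$ onto $C_0(\Go)$. I expect the real obstacle to be this Hilbert-space identification $H_{\varphi_x} \cong L^2(G_x;\Sigma_x)$: one must check that the relation defining $\Sigma$ is simultaneously coarse enough for $\Phi$ to respect the linear relations holding in $A$ and fine enough for each $W_x$ to be a well-defined unitary --- this is precisely what dictates the form of $\approx$ --- and, working in the generality of \cite{Raad2022}, one must carry out all the supporting point-set topology (local compactness of the $Z(n,V)$, subordinate partitions of unity on $G$, openness of $q$) without any countability assumption, since one cannot pass to a separable subalgebra.
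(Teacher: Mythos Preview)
The paper does not prove this theorem at all: it is stated as a background result, attributed to Renault \cite[Theorems~5.2 and~5.9]{Renault2008} and Raad \cite[Theorem~1.2]{Raad2022}, and is simply invoked (together with \cite[Corollary~7.6]{KM2020} and \cite[Lemma~5.4]{BFPR2021}) to establish the implications \cref{it: effective}~$\Leftrightarrow$~\cref{it: Cartan} and \cref{it: effective}~$\Rightarrow$~\cref{it: twisted LBH} of \cref{thm: main}. There is therefore nothing in the paper to compare your sketch against.

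Your sketch is a reasonable outline of how the Renault--Raad arguments actually proceed --- the restriction-map expectation, the normaliser-spanning via partition of unity, the use of effectiveness for maximal abelianness, and the Weyl groupoid/twist construction for the converse --- and your identification of the unitary $W_x \colon H_{\varphi_x} \to L^2(G_x;\Sigma_x)$ as the delicate point is accurate. But none of this is reproduced in the present paper; if you intended to supply what the paper omits, you should say so explicitly, and if you intended to match the paper, the correct ``proof'' here is simply the citation.
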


\section{Effective groupoids}

In \cite[Proposition~4.8(i)]{Renault2008} Renault shows that if $(\Sigma,i,q)$ is a twist over a second-countable locally compact Hausdorff \'etale groupoid $G$, then for any element $a \in C_r^*(G;\Sigma)$ such that $S_a = q(\osupp(j(a)))$ is an open bisection of $G$, we have that $a$ is a normaliser of $C_0(\Go)$, regardless of whether $C_0(\Go)$ is a Cartan subalgebra of $C_r^*(G;\Sigma)$. Raad \cite{Raad2022} also makes this observation in the non-second-countable setting.

In this paper, we say that a twist $\Sigma$ over a Hausdorff \'etale groupoid $G$ satisfies the \hl{C*\nobreakdash-algebraic local bisection hypothesis} if, for every normaliser $n \in C_r^*(G;\Sigma)$ of $C_0(\Go)$, $S_n$ is an open bisection of $G$. Renault shows in \cite[Proposition~4.8(ii)]{Renault2008} that the C*\nobreakdash-algebraic local bisection hypothesis holds for twists over effective groupoids. The purpose of our paper is to show that the C*-algebraic local bisection hypothesis is in fact \emph{equivalent} to the groupoid being effective.

Throughout this section, let $\iota\colon C_0(\Go) \to C_r^*(\Go;q^{-1}(\Go))$ be the isomorphism sending $f \in C_c(\Go)$ to the function $z \cdot x \mapsto \overline{z} f(x)$, and let $E\colon C_r^*(G;\Sigma) \to C_0(\Go)$ be the conditional expectation extending restriction of functions.

The remainder of the paper is devoted to establishing the following theorem.

\begin{thm} \label{thm: main}
Let $G$ be a locally compact Hausdorff \'etale groupoid. The following are equivalent.
\begin{enumerate}[label=(\arabic*)]
\item \label{it: effective} $G$ is effective.
\item \label{it: Cartan} For any twist $(\Sigma,i,q)$ over $G$, $\big(C_r^*(G;\Sigma), C_0(\Go)\big)$ is a Cartan pair.
\item \label{it: twisted LBH} For any twist $(\Sigma,i,q)$ over $G$ and any normaliser $n \in C_r^*(G;\Sigma)$ of $C_0(\Go)$, $S_n = q(\osupp(j(n)))$ is an open bisection of $G$.
\item \label{it: cond exp} For any twist $(\Sigma,i,q)$ over $G$ and any normaliser $n \in C_r^*(G;\Sigma)$ of $C_0(\Go)$, there exists a sequence $(f_k)_{k=1}^\infty \subseteq C_0(\Go)$ such that for each $k \ge 1$,
\[
\iota(f_k)j(n) = \iota(f_k E(n)) = \iota(E(n) f_k) = j(n) \iota(f_k) \in C_r^*(\Go;q^{-1}(\Go)),
\]
and
\[
\iota(E(n)) = \lim_{k \to \infty} (\iota(f_k) j(n)).
\]
\item \label{it: untwisted LBH} For each normaliser $n \in C_r^*(G)$ of $C_0(\Go)$, $S_n = \osupp(j(n))$ is an open bisection of $G$.
\end{enumerate}
\end{thm}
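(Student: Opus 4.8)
The plan is to prove the equivalences in a cycle, with the ``hard'' implication isolated at the end. I would prove $\ref{it: effective} \Rightarrow \ref{it: Cartan} \Rightarrow \ref{it: twisted LBH} \Rightarrow \ref{it: cond exp}$, then $\ref{it: cond exp} \Rightarrow \ref{it: untwisted LBH}$ by specialising to the trivial twist, and finally close the loop with $\ref{it: untwisted LBH} \Rightarrow \ref{it: effective}$, which is the genuinely new content. The first arrow, $\ref{it: effective} \Rightarrow \ref{it: Cartan}$, is exactly \cref{thm: Cartan subalgebras}. For $\ref{it: Cartan} \Rightarrow \ref{it: twisted LBH}$: given that $\big(C_r^*(G;\Sigma), C_0(\Go)\big)$ is a Cartan pair, one invokes Renault's structure theory (the converse direction of \cref{thm: Cartan subalgebras}), which produces a twist over an \emph{effective} groupoid with a compatible isomorphism; one then checks that a normaliser supported (via $j$) on a set whose image is not an open bisection would contradict the open-bisection support description available in the effective picture, i.e. one transports \cite[Proposition~4.8(ii)]{Renault2008} across the isomorphism. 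Alternatively, and more self-containedly, note that if $C_0(\Go)$ is Cartan then for a normaliser $n$ we have $n^*n \in C_0(\Go)$, $E(n^*n) = n^*n$, and the support computation $S_{n^*n} = S_n^{-1} S_n \subseteq \Go$ forces $S_n$ to be a bisection by the standard argument (two elements of $S_n$ with the same source compose with an inverse to land outside $\Go$ unless they are equal).

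For $\ref{it: twisted LBH} \Rightarrow \ref{it: cond exp}$: assuming $S_n$ is an open bisection, $j(n)$ is a $\T$-contravariant function supported on $q^{-1}(S_n)$, and $E(n)$ corresponds to its restriction to $q^{-1}(\Go)$, supported on $q^{-1}(S_n \cap \Go)$. One builds the sequence $(f_k)$ by taking, essentially, $f_k = g_k \circ (\text{the source homeomorphism on } S_n)$ for a suitable approximate-unit-type family $g_k \in C_0(\Go)$ adapted to $\osupp(E(n))$, so that multiplication by $\iota(f_k)$ ``cuts out'' the unit-space part of $j(n)$; the four displayed equalities are then a pointwise computation using the bisection property (each convolution sum collapses to a single term), and the limit statement follows from $f_k \to 1$ on $\supp(E(n))$. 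The implication $\ref{it: cond exp} \Rightarrow \ref{it: untwisted LBH}$ is immediate: apply $\ref{it: cond exp}$ to the trivial twist $\Sigma = G \times \T$, where $j$ and $S_n$ reduce to the untwisted versions; the existence of the cutting-down sequence $(f_k)$ with $\iota(E(n))$ as a limit of products $\iota(f_k)j(n)$ forces $E(n)$ to be ``implemented'' in a way that, together with $n^*n, nn^* \in C_0(\Go)$ (a consequence one extracts along the way), yields the bisection conclusion by the same source-injectivity argument as above.

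The main obstacle is $\ref{it: untwisted LBH} \Rightarrow \ref{it: effective}$, which the paper flags as \cref{prop: untwisted LBH implies effective} and proves via the integer-group example and an analysis of normalisers in finite cyclic group C*-algebras. The strategy there is the contrapositive: if $G$ is \emph{not} effective, then $\Iso(G)^\circ \setminus \Go$ is nonempty and open, so it contains a compact open bisection $B$ lying in a single isotropy group $G_x^x$ for a range of units $x$; restricting attention to $B$ one finds a subgroupoid whose fibres are copies of $\Z$ or of a finite cyclic group $\Z/m\Z$, and one must construct a normaliser $n \in C_r^*(G)$ of $C_0(\Go)$ whose open support $\osupp(j(n))$ fails to be a bisection. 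For the $\Z$ case, one exhibits an explicit element of $C_r^*(\Z) \cong C(\T)$ — the key point established in \cref{sec: example} is that $C(\T)$ (equivalently $\ell^2(\Z)$-type) normalisers of the constants need not be monomial-supported — and pushes it into $C_r^*(G)$ using a continuous local section over $B$ and an Urysohn-type cutoff; for the finite cyclic case one does the analogous thing in $C^*(\Z/m\Z) \cong \C^m$. The technical heart is verifying that the resulting $n$ is genuinely a normaliser of $C_0(\Go)$ in the ambient algebra (not just in the subalgebra over $B$) — this requires controlling $n b n^*$ for $b \in C_0(\Go)$ using that $B$ is a bisection inside the (possibly much larger) groupoid $G$ — and that $j(n)$ has the claimed non-bisection support, which is where the reduced-norm completion and the map $j$ must be handled with care.
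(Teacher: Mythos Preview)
Your ``source-injectivity argument'' is the weak link, and it undermines both your \ref{it: Cartan}$\Rightarrow$\ref{it: twisted LBH} and (more seriously) your \ref{it: cond exp}$\Rightarrow$\ref{it: untwisted LBH}. You claim that $n^*n \in C_0(\Go)$ together with $S_{n^*n} = S_n^{-1}S_n \subseteq \Go$ forces $S_n$ to be a bisection. But the equality $S_{n^*n} = S_n^{-1}S_n$ is false in general: the convolution $j(n)^* * j(n)$ can vanish at points of $S_n^{-1}S_n$ due to cancellation, so one only has $S_{n^*n} \subseteq S_n^{-1}S_n$. Indeed, the integer example in \cref{sec: example} is precisely a normaliser $n$ with $n^*n \in \C\delta_0 = C_0(\Go)$ whose open support $\{1,2\}$ is not a bisection. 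Since every normaliser satisfies $n^*n, nn^* \in C_0(\Go)$ regardless of effectiveness, your argument would prove that \emph{every} normaliser has bisection support, which the paper shows is false. For \ref{it: Cartan}$\Rightarrow$\ref{it: twisted LBH} this is not fatal---the paper simply routes through \ref{it: effective} using the cited equivalence \ref{it: effective}$\Leftrightarrow$\ref{it: Cartan} and then \ref{it: effective}$\Rightarrow$\ref{it: twisted LBH}---but for \ref{it: cond exp}$\Rightarrow$\ref{it: untwisted LBH} it is a genuine gap, and the implication is not ``immediate''.

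What the paper actually does for \ref{it: cond exp}$\Rightarrow$\ref{it: untwisted LBH} is a contradiction argument that you are missing a key step of. If $S_n$ is not a bisection, take $\gamma_1 \ne \gamma_2$ in $S_n$ with $r(\gamma_1)=r(\gamma_2)$, choose $g \in C_c(G)$ supported on a bisection through $\gamma_1^{-1}$ with $g(\gamma_1^{-1})=1$, and set $m \coloneqq g\,j(n)$. Then $m$ is again a normaliser, and now $s(\gamma_1)$ and $\gamma_1^{-1}\gamma_2$ both lie in $S_m$, with the first a unit and the second not. Applying \ref{it: cond exp} to $m$ gives $f_k$ with $f_k(s(\gamma_1)) \to 1$ (from the limit at the unit) while $\iota(f_k)j(m)(\gamma_1^{-1}\gamma_2) = f_k(s(\gamma_1))\,j(m)(\gamma_1^{-1}\gamma_2)$ must tend to $\iota(E(m))(\gamma_1^{-1}\gamma_2)=0$, contradicting $j(m)(\gamma_1^{-1}\gamma_2)\ne 0$. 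The multiplication by $g$ is exactly what reduces the general collision to one involving a unit, where condition \ref{it: cond exp} bites; your sketch skips this. Your outline of \ref{it: untwisted LBH}$\Rightarrow$\ref{it: effective} is essentially correct and matches the paper (minor point: $G$ is not assumed ample, so use a precompact open bisection rather than a compact open one).
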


\begin{proof}
That \cref{it: effective,it: Cartan} are equivalent (without assuming second-countability) follows from \cite[Corollary~7.6]{KM2020}. That \cref{it: effective} implies \cref{it: twisted LBH} follows from \cite[Lemma~5.4]{BFPR2021}, which is a generalisation of Renault's \cite[Proposition~4.8(ii)]{Renault2008}.

\Cref{it: twisted LBH} $\implies$ \cref{it: cond exp}:
Let $(\Sigma,i,q)$ be a twist over $G$, and fix a normaliser $n \in C_r^*(G;\Sigma)$ of $C_0(\Go)$. For each $k \in \N {\setminus} \{0\}$, let
\[
W_k \coloneqq \{ x \in \Go : \abs{E(n)(x)} > \tfrac{1}{k} \} = E(n)^{-1}\big(\C \setminus \overline{B}(0,\tfrac{1}{k})\big),
\]
where $\overline{B}(0,\tfrac{1}{k}) \subseteq \C$ is the closed ball of radius $\tfrac{1}{k}$ centred at $0$. Then each $W_k$ is open since $E(n)$ is continuous. Moreover, since $E(n) \in C_0(\Go)$, the set
\[
V_k \coloneqq \{ x \in \Go : \abs{E(n)(x)} \ge \tfrac{1}{k} \}
\]
is compact and hence closed, and so $\overline{W_k} \subseteq V_k$. Hence each $\overline{W_k}$ is compact and is contained in the set $S_{E(n)} = \osupp(E(n)) = S_n \cap \Go$. For each $k \ge 1$, use Urysohn's lemma to choose $f_k \in C_c(\Go,[0,1])$ such that $\supp(f_k) \subseteq S_{E(n)}$ and $f_k(x) = 1$ for all $x \in \overline{W_k}$. Since $q(\osupp(j(n))) = S_n$ is a bisection of $G$, it follows that
\[
q(\osupp(\iota(f_k) j(n))) \subseteq (S_n \cap \Go) S_n \subseteq S_n \cap \Go = S_{E(n)},
\]
and thus, since $C_0(\Go)$ is abelian, we have
\[
\iota(f_k) j(n) = \iota(f_k E(n)) = \iota(E(n) f_k) = j(n) \iota(f_k) \in C_r^*(\Go;q^{-1}(\Go)).
\]
We now show that the sequence $(E(n) f_k)_{k=1}^\infty$ converges to $E(n)$ in the uniform norm. For each $k \ge 1$, we have
\begin{align*}
\norm{E(n) f_k - E(n)}_\infty &= \sup{\big\{ \abs{(E(n) f_k)(x) - E(n)(x)} : x \in \Go \big\}} \\
&= \sup{\big\{ \abs{E(n)(x)} \, \abs{f_k(x) - 1} : x \in S_{E(n)} {\setminus} \overline{W_k}\big \}}.
\end{align*}
For each $k \ge 1$ and all $x \in S_{E(n)} {\setminus} \overline{W_k}$, we have $x \notin W_k$, and so $\abs{E(n)(x)} \le \frac{1}{k}$ by the definition of $W_k$, and $\abs{f_k(x) - 1} \le 1$ since $f_k \in C_c(\Go,[0,1])$. Therefore, continuing the calculation above, we see that
\[
\norm{E(n) f_k - E(n)}_\infty \le \sup{\big\{ \abs{E(n)(x)} : x \in S_{E(n)} {\setminus} \overline{W_k} \big\}} \le \tfrac{1}{k}
\]
for each $k \ge 1$, and so $\norm{E(n) f_k - E(n)}_\infty \to 0$ as $k \to \infty$.

\Cref{it: cond exp} $\implies$ \cref{it: untwisted LBH}: Suppose for contradiction that there exists a normaliser $n \in C_r^*(G)$ of $C_0(\Go)$ such that $S_n$ is not a bisection of $G$. Then there exist distinct elements $\gamma_1, \gamma_2 \in S_n$ such that $s(\gamma_1) = s(\gamma_2)$ or $r(\gamma_1) = r(\gamma_2)$. Since $n^* \in C_r^*(G)$ is also a normaliser of $C_0(\Go)$, we may assume without loss of generality that $r(\gamma_1) = r(\gamma_2)$, because if $s(\gamma_1) = s(\gamma_2)$, then $r(\gamma_1^{-1}) = r(\gamma_2^{-1})$, and $\gamma_1^{-1}, \gamma_2^{-1} \in S_n^{-1} = S_{n^*}$. Let $B \subseteq G$ be an open bisection of $G$ such that $\gamma_1^{-1} \in B$, and use Urysohn's lemma to choose $g \in C_c(G)$ such that $\supp(g) \subseteq B$ and $g(\gamma_1^{-1}) = 1$. Then $g$ is a normaliser of $C_0(\Go)$, because for all $h \in C_0(\Go)$, we have
\[
\osupp(ghg^*) \cup \osupp(g^*hg) \subseteq BB^{-1} \cup B^{-1}B \subseteq \Go.
\]
Therefore, $m \coloneqq g j(n) \in C_r^*(G)$ is a normaliser of $C_0(\Go)$ with $s(\gamma_1), \gamma_1^{-1} \gamma_2 \in S_m$. By condition~\cref{it: cond exp}, there exists a sequence $(f_k)_{k=1}^\infty \subseteq C_0(\Go)$ such that
\begin{equation} \label{eqn: E(m) limit}
\iota(E(m)) = \lim_{k \to \infty} (\iota(f_k) j(m)) = \lim_{k \to \infty} \iota(f_k E(m)).
\end{equation}
Since $s(\gamma_1) \in S_m \cap \Go = S_{E(m)}$, we have $E(m)(s(\gamma_1)) \ne 0$, and hence \cref{eqn: E(m) limit} implies that
\begin{equation} \label{eqn: f_k(s(gamma_1)) goes to 1}
1 = \frac{E(m)(s(\gamma_1))}{E(m)(s(\gamma_1))} = \lim_{k \to \infty} \frac{(f_k E(m))(s(\gamma_1))}{E(m)(s(\gamma_1))} = \lim_{k \to \infty} f_k(s(\gamma_1)) = \lim_{k \to \infty} \iota(f_k)(s(\gamma_1)).
\end{equation}
Since $\gamma_1^{-1}\gamma_2 \in S_m {\setminus} \Go = S_m {\setminus} S_{E(m)}$, we have
\begin{equation} \label{eqn: j(m) is not E(m)}
\iota(E(m))(\gamma_1^{-1} \gamma_2) = 0 \ne j(m)(\gamma_1^{-1} \gamma_2).
\end{equation}
However, \cref{eqn: E(m) limit,eqn: f_k(s(gamma_1)) goes to 1} together imply that
\begin{align*}
\iota(E(m))(\gamma_1^{-1} \gamma_2) &= \lim_{k \to \infty} (\iota(f_k) j(m))(\gamma_1^{-1} \gamma_2) \\
&= \lim_{k \to \infty} \iota(f_k)(s(\gamma_1)) j(m)(\gamma_1^{-1} \gamma_2) = j(m)(\gamma_1^{-1} \gamma_2),
\end{align*}
which contradicts \cref{eqn: j(m) is not E(m)}.

\Cref{it: untwisted LBH} $\implies$ \cref{it: effective}: Since this argument is more involved, we prove this as \cref{prop: untwisted LBH implies effective} in \cref{sec: Cstar-LBH implies effectiveness}.
\end{proof}

We conclude this section by showing that if the groupoid $G$ is ample, then for any twist $\Sigma$ over $G$, the conditional expectation $E\colon C_r^*(G;\Sigma) \to C_0(\Go)$ is \hl{implemented by projections}, in the sense that the functions $f_k \in C_0(\Go)$ appearing in condition~\cref{it: cond exp} of \cref{thm: main} can be chosen to be projections.

\begin{cor} \label{cor: ample}
Let $G$ be an ample locally compact Hausdorff groupoid. Then $G$ is effective if and only if for any twist $(\Sigma,i,q)$ over $G$ and any normaliser $n \in C_r^*(G;\Sigma)$ of $C_0(\Go)$, there exists a sequence of projections $(p_k)_{k=1}^\infty$ such that for each $k \ge 1$,
\[
\iota(p_k) j(n) = \iota(p_k E(n)) = \iota(E(n) p_k) = j(n) \iota(p_k) \in C_r^*(\Go;q^{-1}(\Go)),
\]
and
\[
\iota(E(n)) = \lim_{k \to \infty} (\iota(p_k) j(n)).
\]
\end{cor}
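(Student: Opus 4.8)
The plan is to derive both implications from \cref{thm: main}. The reverse implication is immediate: if, for every normaliser $n$ of $C_0(\Go)$, there is a sequence of projections $(p_k)_{k=1}^\infty \subseteq C_0(\Go)$ with $\iota(p_k)j(n) = \iota(p_k E(n)) = \iota(E(n)p_k) = j(n)\iota(p_k) \in C_r^*(\Go;q^{-1}(\Go))$ and $\iota(E(n)) = \lim_k \iota(p_k)j(n)$, then $(p_k)$ is in particular a sequence in $C_0(\Go)$ witnessing condition~\cref{it: cond exp} of \cref{thm: main}, so $G$ is effective. Thus the content of the corollary is the forward implication, and the point is to show that when $G$ is ample the functions $f_k$ produced in the proof of \cref{it: twisted LBH} $\implies$ \cref{it: cond exp} may be taken to be projections.

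So suppose $G$ is ample---hence \'etale, so $\Go$ is open in $G$---and effective, and fix a twist $(\Sigma,i,q)$ over $G$ and a normaliser $n \in C_r^*(G;\Sigma)$ of $C_0(\Go)$. By \cref{thm: main}, $S_n$ is an open bisection of $G$, so we are in the setting of the proof of \cref{it: twisted LBH} $\implies$ \cref{it: cond exp}; I would rerun that argument \emph{verbatim}, only changing the choice of the functions $f_k$. With $W_k \coloneqq \{x \in \Go : \abs{E(n)(x)} > \tfrac1k\}$ as in that proof, each $\overline{W_k}$ is compact and contained in the open set $S_{E(n)} = S_n \cap \Go$. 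Because $G$ is ample and $\Go$ is open in $G$, the unit space $\Go$ has a basis of compact open sets (any compact open bisection contained in $\Go$ is such a set); hence each point of $\overline{W_k}$ lies in a compact open subset of $\Go$ that is contained in $S_{E(n)}$, and by compactness finitely many of these cover $\overline{W_k}$. Taking $K_k$ to be their union gives a compact open set with $\overline{W_k} \subseteq K_k \subseteq S_{E(n)}$, and I would set $p_k \coloneqq 1_{K_k} \in C_c(\Go) \subseteq C_0(\Go)$, which is a projection since $K_k$ is clopen in $\Go$.

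It then only remains to note that $p_k$ enjoys exactly the three properties of $f_k$ that the original argument uses: $0 \le p_k \le 1$, $\supp(p_k) = K_k \subseteq S_{E(n)}$, and $p_k \equiv 1$ on $\overline{W_k}$. The first two, together with $S_n$ being a bisection, force $q(\osupp(\iota(p_k)j(n))) \subseteq (S_n \cap \Go)S_n \subseteq S_n \cap \Go = S_{E(n)}$ and hence the chain of identities $\iota(p_k)j(n) = \iota(p_k E(n)) = \iota(E(n)p_k) = j(n)\iota(p_k) \in C_r^*(\Go;q^{-1}(\Go))$, exactly as in that proof; and the same estimate as there gives $\norm{E(n)p_k - E(n)}_\infty \le \tfrac1k \to 0$, whence $\iota(E(n)) = \lim_k \iota(p_k)j(n)$. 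I do not anticipate a genuine obstacle: the only ingredient not already present in the proof of \cref{it: twisted LBH} $\implies$ \cref{it: cond exp} is the standard fact that the unit space of an ample groupoid has a basis of compact open sets, which is precisely what allows a compact open $K_k$ to be inserted between the compact set $\overline{W_k}$ and the open set $S_{E(n)}$; once $K_k$ is chosen, no new computation is needed.
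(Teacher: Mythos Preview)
Your proposal is correct and follows essentially the same approach as the paper: both directions are reduced to \cref{thm: main}, and for the forward implication you rerun the proof of \cref{it: twisted LBH} $\implies$ \cref{it: cond exp}, using ampleness to sandwich a compact open set between $\overline{W_k}$ and an open superset so that the Urysohn function can be replaced by a characteristic function. The only cosmetic difference is that the paper sandwiches $B_k$ between $\overline{W_k}$ and $W_{k+1}$ (using $\overline{W_k} \subseteq W_{k+1}$), whereas you sandwich $K_k$ between $\overline{W_k}$ and $S_{E(n)}$ directly; either choice yields $\supp(p_k) \subseteq S_{E(n)}$ and $p_k\restr{\overline{W_k}} \equiv 1$, which is all that is needed.
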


\begin{proof}
Suppose that $G$ is effective. Let $(\Sigma,i,q)$ be a twist over $G$, and fix a normaliser $n \in C_r^*(G;\Sigma)$ of $C_0(\Go)$. By the implication \cref{it: effective} $\implies$ \cref{it: twisted LBH} of \cref{thm: main}, we know that $S_n = q(\osupp(j(n)))$ is an open bisection of $G$. Following the proof of the implication \cref{it: twisted LBH} $\implies$ \cref{it: cond exp} in \cref{thm: main}, for each $k \in \N {\setminus} \{0\}$, let
\[
W_k \coloneqq \{ x \in \Go : \abs{E(n)(x)} > \tfrac{1}{k} \} = E(n)^{-1}\big(\C \setminus \overline{B}(0,\tfrac{1}{k})\big).
\]
Then for each $k \ge 1$, we have $W_k \subseteq \overline{W_k} \subseteq W_{k+1}$, and since $\overline{W_k}$ is compact and $G$ is ample, we can find a finite cover of $\overline{W_k}$ consisting of compact open subsets $U^{(k)}_1, \dotsc, U^{(k)}_{n_k}$ of $\Go$ contained in $W_{k+1}$. Then the union $B_k \coloneqq \bigcup_{\ell=1}^{n_k} U^{(k)}_\ell$ is a compact open subset of $\Go$ contained in $W_{k+1}$. For each $k \ge 1$, let $p_k \coloneqq 1_{B_k}$. Then each $p_k$ is a projection in $C_c(\Go,[0,1])$ such that $\supp(p_k) \subseteq S_{E(n)}$ and $p_k(x) = 1$ for all $x \in \overline{W_k}$. Thus the remainder of the proof follows exactly as in the proof of the implication \cref{it: twisted LBH} $\implies$ \cref{it: cond exp} in \cref{thm: main}.

Finally, since ample groupoids are \'etale, the converse follows trivially from the implication \cref{it: cond exp} $\implies$ \cref{it: effective} of \cref{thm: main}, which itself follows from \cref{prop: untwisted LBH implies effective}.
\end{proof}

\section{The algebraic vs the C*-algebraic local bisection hypothesis}
\label{sec: example}

In this section we present an example that will be used in the proof of \cref{prop: untwisted LBH implies effective} to show that if $G$ is not effective, then there is a normaliser $n \in C_r^*(G)$ of $C_0(\Go)$ such that $j(n)$ is not supported on a bisection. Our example also demonstrates how the algebraic and analytic situations differ. We think of condition~\cref{it: untwisted LBH} in \cref{thm: main} as the \hl{``untwisted'' C*-algebraic local bisection hypothesis} for $G$. Even if $G$ is ample, this is not the same as the ``local bisection hypothesis'' introduced by Steinberg in \cite[Definition~4.9]{Steinberg2019}: if $R$ is a commutative unital ring (for example, the complex numbers endowed with the discrete topology) and $A_R(G)$ is the Steinberg $R$-algebra associated to $G$, then we say that $G$ satisfies the \hl{(algebraic) local bisection hypothesis} if every normaliser $n \in A_R(G)$ of $A_R(\Go)$ is supported on an open bisection of $G$.

\begin{example} \label{eg: integers}
The integers satisfy the algebraic local bisection hypothesis but not the C*-algebraic local bisection hypothesis; that is, the integers do not satisfy the condition described in \cref{thm: main}\cref{it: untwisted LBH}.
\end{example}

\begin{proof}
The first claim follows from \cite[Corollary~9.3]{ACCCLMRSS2023}. To prove the second claim, we identify $C_r^*(\Z) = C^*(\Z)$ with $C(\T)$ via the Fourier transform $\FF\colon C^*(\Z) \to C(\T)$, which sends the generating unitary $u = \delta_1$ of $C^*(\Z)$ to the identity map on $\T$. The groupoid $G = \Z$ has unit space $\Go = \{0\}$, and so $\FF(C_0(\Go)) = \C 1_{C(\T)}$. Consider the function $m\colon \T \to \C$ given by
\[
m(z) = \frac{z - 2z^2}{\abs{z - 2z^2}}.
\]
Then $m$ is continuous and circle-valued since the zeros of $z-2z^2$ are $z = 0$ and $z = \frac{1}{2}$. Since $m^*m = mm^* = 1$, $m \in C(\T)$ is a normaliser of $\C 1_{C(\T)}$. However,
\[
j(\FF^{-1}(m)) = \frac{u - 2(u * u)}{\norm{u - 2(u * u)}} = \frac{\delta_1 - 2\delta_2}{\norm{\delta_1 - 2\delta_2}}
\]
is nonzero on both of the open bisections $\{1\}$ and $\{2\}$. So $S_{\FF^{-1}(m)} = \osupp(j(\FF^{-1}(m)))$ is not a bisection of $\Z$, and hence the C*-algebraic local bisection hypothesis does not hold.
\end{proof}

\section{The C*-algebraic local bisection hypothesis implies effectiveness}
\label{sec: Cstar-LBH implies effectiveness}

In this section, we complete the proof of \cref{thm: main} by establishing that \cref{it: untwisted LBH} implies \cref{it: effective}, restated in the following proposition.

\begin{prop} \label{prop: untwisted LBH implies effective}
Let $G$ be a locally compact Hausdorff \'etale groupoid. Suppose that for every normaliser $n \in C_r^*(G)$ of $C_0(\Go)$, $S_n = \osupp(j(n))$ is an open bisection of $G$. Then $G$ is effective.
\end{prop}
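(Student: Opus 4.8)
The plan is to prove the contrapositive: assuming $G$ is not effective, I will exhibit a normaliser $n \in C_r^*(G)$ of $C_0(\Go)$ whose support $S_n = \osupp(j(n))$ is not a bisection. Since $G$ is not effective there is $\gamma_0 \in \Iso(G)^\circ$ with $\gamma_0 \notin \Go$; set $x_0 := s(\gamma_0) = r(\gamma_0)$, and let $d \in \{2,3,\dotsc\}\cup\{\infty\}$ be the order of $\gamma_0$ in the isotropy group $G_{x_0}^{x_0}$ (so $d \ge 2$, as $\gamma_0 \neq x_0$). Because $\Iso(G)^\circ$ is open and $G$ has a basis of open bisections, fix an open bisection $B$ with $\gamma_0 \in B \subseteq \Iso(G)^\circ$; then $r\restr{B} = s\restr{B}$, and $\gamma_0$ is the unique element of $B$ lying over $x_0$, so $x_0 \notin B$, $\gamma_0^{\,2} \notin B$, and $\gamma_0^{-1} \notin B$ unless $d = 2$. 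No further shrinking of $B$ is needed.

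The crucial point is a commutation fact: any $g \in C_c(G)$ with $\supp g \subseteq B$ commutes with $C_0(\Go)$ inside $C_r^*(G)$. Indeed, for $h \in C_c(\Go)$ one has $(g*h)(\gamma) = g(\gamma)\,h(s(\gamma))$ and $(h*g)(\gamma) = g(\gamma)\,h(r(\gamma))$, which agree because $g(\gamma) \neq 0$ forces $\gamma \in \Iso(G)$ and hence $r(\gamma) = s(\gamma)$; density of $C_c(\Go)$ in $C_0(\Go)$ then gives $gh = hg$ for all $h \in C_0(\Go)$. Now fix $g \in C_c(G)$ with $0 \le g \le 1$, $\supp g \subseteq B$ and $g(\gamma_0) = 1$, noting $\rnorm{g} = \norm{g}_\infty = 1$ since $g$ is supported on a bisection. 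For a scalar $c$ with $\abs{c}$ small, $1 - cg$ is invertible in the unitisation $C_r^*(G)^\sim$, so its polar part $w := (1-cg)\bigl((1-cg)^*(1-cg)\bigr)^{-1/2}$ is a well-defined unitary there; writing $b := (1-cg)^*(1-cg) - 1 = -cg - \overline{c}\,g^* + \abs{c}^2 g^*g$ (so $\rnorm{b} \le 2\abs{c}+\abs{c}^2 < 1$), we have $\bigl((1-cg)^*(1-cg)\bigr)^{-1/2} = 1 + b'$ with $b' := \sum_{k\ge1}\binom{-1/2}{k}b^k \in C_r^*(G)$. Since $g$ commutes with $C_0(\Go)$, so do $1-cg$, its modulus, and $w$; thus $w$ centralises $C_0(\Go)$. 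Therefore $n := g*w \in C_r^*(G)$ is a normaliser of $C_0(\Go)$, because for $h \in C_0(\Go)$ one gets $nhn^* = g(whw^*)g^* = ghg^*$ and $n^*hn = w^*(g^*hg)w = g^*hg$, both lying in $C_0(\Go)$ as $g$, being supported on the bisection $B$, is already a normaliser of $C_0(\Go)$.

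Next I would show $S_n$ is not a bisection. Put $w_0 := w - 1 = b' - cg - c(g*b') \in C_r^*(G)$, so $j(n) = g + g*j(w_0)$. Evaluating along the fibre over $x_0$ and using $\supp g \subseteq B$ together with the isotropy structure, the convolution sums collapse:
\[
j(n)(\gamma_0) = 1 + j(w_0)(x_0), \qquad j(n)(\gamma_0^{\,2}) = j(w_0)(\gamma_0) \ \ (d\neq 2), \qquad j(n)(x_0) = j(w_0)(\gamma_0) \ \ (d = 2).
\]
Since $j$ is norm-decreasing and $\rnorm{b'} = O(\abs{c})$, one has $j(w_0)(x_0) = O(\abs{c}^2)$, so $j(n)(\gamma_0) \neq 0$ for small $\abs{c}$. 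A short leading-order computation, expanding $b'$ and using $g(\gamma_0) = 1$, $g(\gamma_0^{\,2}) = 0$, $g^*(\gamma_0) = \overline{g(\gamma_0^{-1})}$ and $(g^*g)(x_0) = 1$, gives $j(w_0)(\gamma_0) = -\tfrac{c}{2} + O(\abs{c}^2)$ when $d \neq 2$ (any small $c \neq 0$ suffices); when $d = 2$, each element of $B$ is its own inverse, so $g^* = g$ on $B$ and, taking $c$ purely imaginary, $b$ in fact lies in $C_c(\Go)$, forcing $b'(\gamma_0) = 0$ and hence $j(w_0)(\gamma_0) = -c\,(1 + O(\abs{c}^2))$. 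In all cases, for a suitably small nonzero $c$ the function $j(n)$ is nonzero at two distinct elements with range $x_0$ — namely $\gamma_0$ and $\gamma_0^{\,2}$ when $d \neq 2$, and $\gamma_0$ and $x_0$ when $d = 2$ — so $S_n$ is not a bisection, contradicting the hypothesis; hence $G$ is effective.

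The hard part will be the leading-order analysis of $j(w_0)(\gamma_0)$, and in particular checking that it does not vanish: this is exactly where the order $d$ of $\gamma_0$ enters, and where one needs to know which normalisers of the scalars in the cyclic group C*-algebras $C^*(\Z)$ and $C^*(\Z/d\Z)$ are \emph{not} supported on a single group element — the $d = \infty$ instance being \cref{eg: integers}, and the $d = 2$ subtlety above (needing $c \notin \mathbb{R}$) reflecting the analogous fact in $C^*(\Z/2\Z)$. A cleaner but essentially equivalent route is to first pass to the open subgroupoid $\bigcup_k B^k \subseteq \Iso(G)^\circ$ generated by $B$ — a group bundle with cyclic fibres, which after shrinking $B$ is isomorphic to $U \times \Z/d\Z$ when $d < \infty$ (one separates $\gamma_0, \gamma_0^{\,2}, \dotsc, \gamma_0^{\,d-1}$ and $x_0 = \gamma_0^{\,d}$, using that $\Go$ is open) — and then to transport, along the support- and expectation-compatible embedding of the reduced C*-algebra of an open subgroupoid into $C_r^*(G)$, a bad normaliser of the form $f \otimes m$, where $f \in C_c(U)$ with $f(x_0) \neq 0$ and $m$ is a normaliser of $\C 1$ in $C^*(\Z/d\Z)$ (or $C^*(\Z)$) that is not a scalar multiple of a group element; for $d \ge 2$ such an $m$ exists by a dimension count, these normalisers being precisely the constant-modulus functions on $\widehat{\Z/d\Z}$, of which only finitely many circles' worth are scalar multiples of characters.
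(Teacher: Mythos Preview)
Your approach is correct and genuinely different from the paper's. The paper proceeds by first passing to an open subgroupoid of $\Iso(G)^\circ$ generated by a carefully shrunk bisection $B$, which (after reducing to a prime $p$ dividing the order of $\gamma_0$, or to $\Z$ in the torsion-free case) becomes a trivial cyclic-group bundle $U \times \Gamma$; it then transports an explicit ``bad'' normaliser from $C^*(\Gamma)$---namely the Gauss-sum element $\sum_k \zeta^{k^2}\delta_k$ for odd $p$, $\delta_0 - i\delta_1$ for $p=2$, and the unitary $(u-2u^2)/\lvert u-2u^2\rvert$ for $\Z$---along $C^*_r(R) \cong C_0(U) \otimes C^*_r(\Gamma) \hookrightarrow C_r^*(G)$. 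You instead work directly in $C_r^*(G)$: exploiting that any $g$ supported in $\Iso(G)^\circ$ centralises $C_0(\Go)$, you form the unitary $w = (1-cg)\lvert 1-cg\rvert^{-1}$ and take $n = g*w$, then use a first-order expansion in $c$ to show $S_n$ is not a bisection. Your route is more uniform (a single construction handles all orders $d$) and avoids both the subgroupoid reduction and the explicit group-algebra computations, at the price of a perturbative estimate; the paper's route yields concrete normalisers and makes the link to cyclic-group C*-algebras explicit. Your final paragraph is essentially the paper's argument.

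One small slip: in the $d=2$ case you assert that ``each element of $B$ is its own inverse'', but this need not hold for an arbitrary open bisection $B \subseteq \Iso(G)^\circ$ containing $\gamma_0$---other elements of $B$ may well have different (even infinite) order---so your claim that $b \in C_c(\Go)$ is not justified as stated. This is harmless for the conclusion: either shrink $B$ to the open set $\{\gamma \in B : \gamma^2 \in \Go\}$ (contradicting your earlier ``no further shrinking needed'' remark), or simply note that the pointwise evaluation only uses $g(\gamma_0^{-1}) = g(\gamma_0) = 1$, which already gives $b(\gamma_0) = -(c+\bar c)$ and hence $j(w_0)(\gamma_0) = -i\,\Im(c) + O(\lvert c\rvert^2)$, so a small purely imaginary $c$ works.
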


In our proof of \cref{prop: untwisted LBH implies effective}, we consider the possible orders of elements in $\Iso(G)^{\circ}$, the interior of the isotropy of the groupoid $G$. Most of the work comes in dealing with the existence of torsion. If every element of $\Iso(G)^{\circ}$ has infinite order, an argument like the one in \cref{eg: integers} makes the proof fairly straight forward. We begin with a technical lemma about normalisers in the group algebra of a group with prime order.

\begin{lemma} \label{lemma: prime group}
Let $p$ be a prime number, let $\Gamma \coloneqq \Z/p\Z$, let $\zeta$ be a nontrivial $p\th$ root of unity, and let $\delta_k\colon \Gamma \to \C$ be the point-mass function at $k \in \Gamma$.
\begin{enumerate}[label=(\alph*)]
\item \label{it: even prime} If $p = 2$, then $n \coloneqq \delta_0 - i \delta_1 \in C_r^*(\Gamma)$ is a normaliser of $\C \delta_0$ with $\osupp(n) = \Gamma$.
\item \label{it: odd prime} If $p \ne 2$, then $n \coloneqq \sum_{k = 0}^{p-1} \zeta^{k^2}\delta_k \in C_r^*(\Gamma)$ is a normaliser of $\C\delta_0$ with $\osupp(n) = \Gamma$.
\end{enumerate}
\end{lemma}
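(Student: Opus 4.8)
The plan is to verify directly, for each case, that the given element $n$ is a unitary in $C_r^*(\Gamma) = C^*(\Gamma)$ (a finite-dimensional commutative C*-algebra), and then to observe that any unitary is automatically a normaliser of any C*-subalgebra, in particular of $\C\delta_0$; the claim about the open support is then immediate from the explicit formula for $n$. Since $\Gamma$ is finite abelian, $C^*(\Gamma) \cong C(\widehat{\Gamma})$, and under this identification $n$ becomes a function on the dual group; it suffices to check that this function is everywhere of modulus $1$. Equivalently — and this is the more hands-on route I would take — I would compute $n^* n = n n^*$ in the convolution algebra $C_r^*(\Gamma)$ and show it equals $\delta_0$ (the identity).

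For part \ref{it: even prime}, with $n = \delta_0 - i\delta_1$ in $C^*(\Z/2\Z)$, we have $n^* = \delta_0 + i\delta_1$ (using $\delta_1^* = \delta_{-1} = \delta_1$), and $n^* n = (\delta_0 + i\delta_1)*(\delta_0 - i\delta_1) = \delta_0 + \delta_1 * \delta_1 = \delta_0 + \delta_0 = 2\delta_0$. That is off by a factor of $2$, so strictly $n$ is $\sqrt 2$ times a unitary; but being a nonzero scalar multiple of a unitary is still enough to be a normaliser of $\C\delta_0$, since $n\,(\lambda\delta_0)\,n^* = \lambda\, n n^* = 2\lambda\delta_0 \in \C\delta_0$ and similarly for $n^* n$. (Alternatively one normalises by $1/\sqrt2$, exactly as the integer example rescaled $\delta_1 - 2\delta_2$; either phrasing is fine.) Finally $\osupp(n) = \{0,1\} = \Gamma$ is clear since both coefficients $1$ and $-i$ are nonzero.

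For part \ref{it: odd prime}, the element $n = \sum_{k=0}^{p-1}\zeta^{k^2}\delta_k$ is (the Fourier side of) a quadratic Gauss sum, and the key computation is that $n^* n = \bigl(\sum_k \overline{\zeta^{k^2}}\delta_{-k}\bigr) * \bigl(\sum_\ell \zeta^{\ell^2}\delta_\ell\bigr) = \sum_{m} c_m \delta_m$ where $c_m = \sum_{k} \overline{\zeta^{k^2}}\,\zeta^{(k+m)^2} = \zeta^{m^2}\sum_k \zeta^{2km}$; for $m \neq 0$ in $\Z/p\Z$ the inner sum $\sum_{k=0}^{p-1}\zeta^{2km}$ is a full sum of $p$-th roots of unity (since $2m$ is invertible mod $p$, as $p$ is odd) and hence vanishes, while for $m = 0$ it equals $p$. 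Thus $n^* n = p\,\delta_0$, so again $n$ is $\sqrt p$ times a unitary and therefore a normaliser of $\C\delta_0$; and $\osupp(n) = \Gamma$ because every coefficient $\zeta^{k^2}$ is a root of unity, hence nonzero. The main point to get right is the orthogonality/character-sum computation in part \ref{it: odd prime} — specifically that $2m$ is a unit modulo $p$ when $p$ is an odd prime, which is exactly where the hypothesis $p \neq 2$ is used — but this is a short standard calculation and poses no real obstacle.
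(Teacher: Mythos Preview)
Your proposal is correct and follows essentially the same approach as the paper: in both cases one computes $nn^*$ (equivalently $n^*n$, since $C^*(\Gamma)$ is commutative) directly in the convolution algebra and finds it equals $p\,\delta_0$, whence $n$ is a scalar multiple of a unitary and hence a normaliser of $\C\delta_0$. Your organisation of the Gauss-sum computation in part~\ref{it: odd prime}, expanding $(k+m)^2 - k^2 = 2km + m^2$ and invoking invertibility of $2m$ mod $p$, is a slightly more streamlined version of the paper's argument, which reindexes $n^*$ first and then checks that the exponents $k(2\ell-k)$ run over all residues mod $p$; but the underlying idea and the use of $p\ne 2$ are identical.
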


\begin{proof}
The proof of part~\cref{it: even prime} is straightforward: if $p = 2$, then
\[
n^*n = nn^* = (\delta_0 - i \delta_1)(\delta_0 + i \delta_1) = 2 \delta_0 \in \C \delta_0,
\]
and clearly $\osupp(n) = \Gamma$, so we are done. Now for part~\cref{it: odd prime}, let $p$ be an odd prime. Then for each $k \in \Gamma$, $n(k) = \zeta^{k^2} \ne 0$, and so $\osupp(n) = \Gamma$. We have
\[
n^* = \sum_{k=0}^{p-1} \zeta^{-k^2} \delta_{-k} = \sum_{k=0}^{p-1} \zeta^{-(p-k)^2} \delta_{p-k},
\]
and by letting $\ell = p - k$, we obtain
\[
n^* = \sum_{\ell=1}^p \zeta^{-\ell^2} \delta_\ell = \sum_{\ell=0}^{p-1} \zeta^{-\ell^2} \delta_\ell.
\]
Therefore,
\[
n n^* = \Big( \sum_{k=0}^{p-1} \zeta^{k^2} \delta_k \Big) \Big( \sum_{\ell=0}^{p-1} \zeta^{-\ell^2} \delta_\ell \Big) = \sum_{k=0}^{p-1} \Big( \sum_{\ell=0}^{p-1} \zeta^{\ell^2 - (k-\ell)^2} \Big) \delta_k = \sum_{k=0}^{p-1} \Big( \sum_{\ell=0}^{p-1} \zeta^{k(2\ell-k)} \Big) \delta_k.
\]
When $k = 0$, we have
\[
\sum_{\ell=0}^{p-1} \zeta^{k(2\ell-k)} = \sum_{\ell=0}^{p-1} 1 = p.
\]
We claim that when $k \ne 0$ is fixed, the terms in the sum
\begin{equation} \label{eqn: p sum terms}
\sum_{\ell=0}^{p-1} \zeta^{k(2\ell-k)}
\end{equation}
are a permutation of the $p\th$ roots of unity. To see this, note that if $k \in \{1, \dotsc, p-1\}$ and $k(2\ell_1-k) \equiv k(2\ell_2-k) \pmod{p}$ for some $\ell_1, \ell_2 \in \{0, \dotsc, p-1\}$, then we must have $\ell_1 \equiv \ell_2 \pmod{p}$, because $p$ is an odd prime. Therefore, each term in the sum~\labelcref{eqn: p sum terms} is a different power of $\zeta$. Since there are $p$ different terms in the sum, we must run through all of the powers of $\zeta$, which proves the claim. Now, since the sum of the $p\th$ roots of unity is $\frac{\zeta^p - 1}{\zeta - 1} = 0$, the coefficient of $\delta_k$ is $0$ for each $k \in \{1, \dotsc, p-1\}$. Thus $nn^* = p \delta_0 \in \C \delta_0$, and it follows that $n$ is a normaliser of $\C\delta_0$.
\end{proof}

The next lemma uses theory developed for groups that will help in our groupoid setting. We note that if a subset $S$ of a groupoid $G$ has the property that $S^N \subseteq \Go$ for some $N \in \N {\setminus} \{0\}$, and $r(\sigma), s(\sigma) \in S^N$ for every $\sigma \in S$, then we can view the collection of sets $\{S^k \colon k \in \Z \}$ as a cyclic group of order $N$ with identity $S^N$. Note that such a subset $S$ would necessarily be a bisection of $G$, because $SS^{-1} \cup S^{-1}S = S^N \subseteq \Go$.

\begin{lemma} \label{lemma: nice bisection}
Let $G$ be a locally compact Hausdorff \'etale groupoid. Suppose that $\gamma \in \Iso(G)^\circ$ satisfies $\gamma^N\in \Go$ for some $N \in \N {\setminus} \{0\}$ and that $N$ is the smallest such natural number. Then there exists an open bisection $B$ of $\Iso(G)^\circ$ with $\gamma \in B$ such that $B^N\subseteq \Go$. Thus $\{ B^k \colon k \in \Z \}$ is a cyclic group of order $N$ with identity $B^N$.
\end{lemma}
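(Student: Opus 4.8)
The plan is to work inside $H \coloneqq \Iso(G)^\circ$, which by the discussion above is again a locally compact Hausdorff \'etale groupoid with unit space $\Go$, and to obtain $B$ by shrinking an arbitrary open bisection around $\gamma$ until its $N$-th power falls into $\Go$. Writing $x \coloneqq r(\gamma) = s(\gamma)$, note first that $\gamma^N = x$, since $\gamma^N \in \Go \cap G_x^x = \{x\}$. Since $H$ is \'etale, I would pick an open bisection $V$ of $H$ with $\gamma \in V$; every element of $V$ is isotropy, so $\beta^k$ is defined for all $\beta \in V$ and $k \in \N$, and iterating the definition of the product of subsets gives $\beta^k \in V^k$.

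The crux is the observation that, because $r\restr{V}$ is injective, one in fact has $V^k = \{\beta^k : \beta \in V\}$ for every $k \ge 1$: if $\beta_1\beta_2$ is defined with $\beta_1, \beta_2 \in V$ then $r(\beta_1) = s(\beta_1) = r(\beta_2)$ forces $\beta_1 = \beta_2$, and an easy induction (using $s(\beta^k) = r(\beta)$ for isotropy elements) treats longer products. Granting this, the map $\psi\colon V \to H$, $\psi(\beta) \coloneqq \beta^N$, is continuous --- it is the restriction to the diagonal of the continuous $N$-fold multiplication map, the diagonal landing among composable tuples precisely because the elements of $V$ are isotropy --- and it satisfies $\psi(V) = V^N$. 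Since $\psi(\gamma) = x \in \Go$ and $\Go$ is open in $H$, the set $B \coloneqq \psi^{-1}(\Go)$ is an open neighbourhood of $\gamma$ inside $V$, hence an open bisection of $H$, and by the displayed identity $B^N = \psi(B) \subseteq \Go$.

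It then remains to read off the group structure. For $\beta \in B$ we get $\beta^N \in G_{r(\beta)}^{r(\beta)} \cap \Go = \{r(\beta)\}$, so $B^N = r(B) = s(B)$ and in particular $r(\sigma), s(\sigma) \in B^N$ for all $\sigma \in B$; thus the remark preceding the lemma applies and $\{B^k : k \in \Z\}$ is a cyclic group with identity $B^N$. To see the order is exactly $N$ I would argue: if its order were $d$ (necessarily a divisor of $N$), then $\gamma^d \in B^d = B^N \subseteq \Go$, so $\gamma^d = x$, and minimality of $N$ forces $d = N$.

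I expect the one point needing care to be the identity $V^k = \{\beta^k : \beta \in V\}$ --- that composability of a string of elements drawn from a bisection of the isotropy forces them all to coincide --- since the remaining ingredients (continuity of iterated multiplication, openness of $\Go$ in the \'etale groupoid $\Iso(G)^\circ$, and the cyclic-group bookkeeping) are routine.
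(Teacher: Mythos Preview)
Your proof is correct and follows essentially the same idea as the paper's: starting from an open bisection $L$ of $\Iso(G)^\circ$ containing $\gamma$, the paper sets $W \coloneqq L^N \cap \Go$ and $B \coloneqq LW$, then uses that $L$ consists of isotropy to compute $B^N = L^N W \subseteq \Go$; unwinding this, $B = LW$ is exactly your set $\psi^{-1}(\Go)$ (with $V = L$), so the two constructions coincide. Your packaging via the continuous power map $\psi(\beta) = \beta^N$ and the explicit identity $V^k = \{\beta^k : \beta \in V\}$ is a clean way to isolate why the shrinking works, while the paper's product formulation $B^N = L^N W$ achieves the same containment more implicitly via the bisection property of $L^N$.
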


\begin{proof} Since $\gamma\in \Iso(G)^\circ$ and $G$ has a basis of open bisections, there exists an open bisection $L$ of $G$ such that $\gamma \in L \subseteq \Iso(G)^\circ$. Thus $W \coloneqq L^N \cap \Go$ is an open subset of $\Go$ containing $\gamma^N = s(\gamma)$. Take $B \coloneqq LW$. Then $B$ is an open bisection of $\Iso(G)^\circ$ containing $\gamma$. Since $L$ consists entirely of isotropy, we have $B^N = L^NW \subseteq \Go$, and $N$ is the smallest such natural number because $\gamma \in B$. For all $\sigma \in B \subseteq \Iso(G)^\circ$, we have $\sigma^N \in B^N \subseteq \Go$, and so $r(\sigma) = s(\sigma) = s(\sigma^N) = \sigma^N \in B^N$. It follows that $\{ B^k \colon k \in \Z \}$ is a cyclic group of order $N$ with identity $B^N$.
\end{proof}

We conclude this section by proving \cref{prop: untwisted LBH implies effective}, which completes the proof of \cref{thm: main}.

\begin{proof}[Proof of \cref{prop: untwisted LBH implies effective}]
We prove the contrapositive: if $G$ is not effective, then $G$ does not satisfy the C*-algebraic local bisection hypothesis. Suppose that $G$ is not effective. Then $\Iso(G)^\circ \ne \Go$. There are two cases to consider.

\emph{Case 1:} Suppose there exists $\gamma \in \Iso(G)^\circ {\setminus} \Go$ such that $\gamma^N\in \Go$ for some natural number $N > 1$, and that $N$ is the smallest such natural number. Define $B \subseteq \Iso(G)^\circ$ as in \cref{lemma: nice bisection}, and let $H$ be the cyclic group $\{B^k : k \in \Z\}$ of order $N$. Suppose that $p$ is a prime factor of $N$. Then there exists $c \in \N$ such that $\{ B^{c\ell} : \ell \in \Z \}$ is a cyclic subgroup of $H$ of order $p$.

Let $\Gamma \coloneqq \Z/p\Z$, and let $V \coloneqq B^c {\setminus} \Go$. Then $V$ is an open bisection of $G$, and $R \coloneqq \bigcup_{\ell=0}^{p-1} V^\ell \subseteq \Iso(G)^\circ$ is an open subgroupoid of $G$ with $R^{(0)} = V^0 = r(V)$. Thus, by \cite[Lemma~2.7]{BFPR2021}, there is an embedding $\psi\colon C_r^*(R) \hookrightarrow C_r^*(G)$.

We claim that if $k, \ell \in \{0, \dotsc, p-1\}$ and $k < \ell$, then $V^k \cap V^\ell = \varnothing$. To see this, suppose for contradiction that $\alpha, \beta \in V$ satisfy $\alpha^k = \beta^\ell$. Then $r(\alpha) = r(\beta)$, so $\alpha = \beta$ since $V$ is a bisection. Thus $\alpha^{\ell - k} = r(\alpha) \in \Go$. Also, since $V^p \subseteq \Go$, we have $\alpha^p \in \Go$. Let $d$ be the smallest positive natural number such that $\alpha^d \in \Go$. Then $0 < d \le \ell - k < p$. By the quotient-remainder theorem, there exist $q, t \in \N$ such that $0 \le t < d$ and $p = dq + t$. Thus $\alpha^t = \alpha^{p - dq} \in \Go$, which implies that $t = 0$, because $d > 0$ was chosen minimally. Thus $p = dq$, and since $p$ is prime and $d < p$, we must have $d = 1$. But then $\alpha = \alpha^d \in \Go$, which is a contradiction because $\alpha \in V = B^c {\setminus} \Go$. Thus $V^k \cap V^\ell = \varnothing$, as claimed. It follows that $R = \bigsqcup_{\ell=0}^{p-1} V^\ell \cong V^0 \rtimes \Gamma$ is a trivial $\Gamma$-bundle over $V^0$, and so by \cite[Lemma~2.73]{Williams2007}, we have $C_r^*(R) \cong C_0(V^0) \otimes C_r^*(\Gamma)$.

Fix $f \in C_c(V^0)$ with $f(r(\gamma)) = 1$. It is straightforward to check that for any normaliser $n \in C_r^*(\Gamma)$ of $\C \delta_0$, $f \otimes n \in C_r^*(R)$ is a normaliser of $C_0(V^0)$, and it follows that $\psi(f \otimes n) \in C_r^*(G)$ is a normaliser of $C_0(\Go)$. Now let $n \in C_r^*(\Gamma)$ be the normaliser of $\C \delta_0$ given in \cref{lemma: prime group}. Then $m \coloneqq \psi(f \otimes n) \in C_r^*(G)$ is a normaliser of $C_0(\Go)$ satisfying
\[
m(\alpha) = \begin{cases}
f(r(\alpha)) \, n(\ell) & \ \text{if } \alpha \in V^\ell \text{ for some } \ell \in \{0,\dotsc,p-1\} \\
0 & \ \text{else}.
\end{cases}
\]
However, $S_m = \osupp(j(m))$ is not a bisection of $G$, because $S_n = \osupp(n) = \Gamma$, and so $r(\gamma), \gamma \in S_m$. Thus the C*-algebraic local bisection hypothesis does not hold in this case.

\emph{Case 2:} Suppose that all elements of $\Iso(G)^\circ {\setminus} \Go$ have infinite order. Fix $\gamma \in \Iso(G)^\circ {\setminus} \Go$, and let $B$ be an open bisection of $G$ such that $\gamma \in B \subseteq \Iso(G)^\circ {\setminus} \Go$. Let $R \coloneqq \bigcup_{k \in \Z} B^k$. Then $R$ is an open subgroupoid of $G$ with $R^{(0)} = B^0 = r(B)$, and so by \cite[Lemma~2.7]{BFPR2021}, there is an embedding $\psi\colon C_r^*(R) \hookrightarrow C_r^*(G)$. Since all elements of $\Iso(G)^\circ {\setminus} \Go$ have infinite order, we have $B^\ell \cap B^k = \varnothing$ whenever $\ell \ne k$. It follows that $R = \bigsqcup_{k \in \Z} B^k = B^0 \rtimes \Z$ is a trivial $\Z$-bundle over $B^0$, and so by \cite[Lemma~2.73]{Williams2007}, we have $C_r^*(R) \cong C_0(B^0) \otimes C_r^*(\Z)$.

Now apply the argument from \cref{eg: integers} to obtain a normaliser $n \in C_r^*(\Z)$ of $\C \delta_0$ such that $1, 2 \in S_n = \osupp(j(n))$; that is $S_n$ is not a bisection of $\Z$. Similar to Case 1, fix $f \in C_c(B^0)$ with $f(r(\gamma)) = 1$. Then $f \otimes n \in C_r^*(R)$ is a normaliser of $C_0(B^0)$, and it follows that $m \coloneqq \psi(f \otimes n) \in C_r^*(G)$ is a normaliser of $C_0(\Go)$ satisfying
\[
m(\alpha) = \begin{cases}
f(r(\alpha)) \, n(k) & \ \text{if } \alpha \in V^k \text{ for some } k \in \Z \\
0 & \ \text{else}.
\end{cases}
\]
However, $S_m = \osupp(j(m))$ is not a bisection of $G$, because both $1, 2 \in S_n$, and so $\gamma, \gamma^2 \in S_m$. Thus the C*-algebraic local bisection hypothesis does not hold in this case either.
\end{proof}

\vspace{2ex}
\bibliographystyle{amsplain}
\makeatletter\renewcommand\@biblabel[1]{[#1]}\makeatother
\bibliography{ABCCLMR_references}
\vspace{-3ex}

\end{document}